\documentclass[12pt,reqno]{amsart}

\usepackage[margin=1in]{geometry}
\usepackage[fleqn,tbtags]{mathtools}
\usepackage[shortlabels]{enumitem}
\usepackage{graphicx}
\usepackage{amssymb}
\usepackage{amsthm}
\usepackage{mathrsfs}
\usepackage{mlmodern}
\usepackage{eucal}
\usepackage{microtype}

\usepackage[nodisplayskipstretch]{setspace}
\usepackage{tikz}
\usetikzlibrary{arrows.meta,positioning,calc}
\usepackage{xcolor}

\usepackage[colorlinks, breaklinks=true, pagebackref=false]{hyperref}
\usepackage[nameinlink, capitalize, noabbrev]{cleveref}

\newtheorem{thm}{Theorem}[section]
\newtheorem{prop}{Proposition}[section]
\newtheorem{lem}{Lemma}[section]

\newtheorem{rem}{Remark}[section]

\newtheorem{prob}{Problem}[section]

\newcommand{\PSH}{\operatorname{PSH}}
\newcommand{\QPSH}{\operatorname{QPSH}}
\newcommand{\ddc}{dd^c}

\newcommand{\supp}{\operatorname{supp}}

\newcommand{\restr}[2]{\left.#1\right|_{#2}}

\title[Extension of K\"ahler currents]{On the extension of K\"ahler currents on compact complex manifolds}
\author[J. Ning]{{Jiafu Ning}}
\address{Jiafu Ning: \ School of Mathematics and Statistics\\ HNP-LAMA\\Central South University\\Changsha\\Hunan 410083\\ P. R. China}
\email{jfning@csu.edu.cn}
\author[K. Pang]{Kai Pang}
\address{Kai Pang: \ Institute of Mathematics\\Academy of Mathematics and Systems Science\\
	Chinese Academy of Sciences\\Beijing 100190\\ P. R. China}
\email{pangkai@amss.ac.cn}
\author[H. Sun]{Haoyuan Sun}
\address{Haoyuan Sun: School of Mathematical Sciences\\ Beijing Normal University\\ Beijing 100875\\ P. R. China}
\email{202531130037@mail.bnu.edu.cn}
\author[Z. Wang]{Zhiwei Wang}
\address{Zhiwei Wang: Laboratory of Mathematics and Complex Systems (Ministry of Education)\\ School of Mathematical Sciences\\ Beijing Normal University\\ Beijing 100875\\ P. R. China}
\email{zhiwei@bnu.edu.cn}

\author[X. Zhou]{Xiangyu Zhou}
\address{Xiangyu Zhou: Institute of Mathematics\\ Academy of Mathematics and Systems Science\\
	and Hua Loo-Keng Key Laboratory of Mathematics\\ Chinese Academy of Sciences\\ Beijing 100190\\ P. R. China}

\email{xyzhou@math.ac.cn}

\subjclass[2020]{32U05, 32Q15, 53C55}
\keywords{quasiplurisubharmonic function, K\"ahler current, extension theorem, compact K\"ahler manifold, complex submanifold}

\begin{document}
	
	\begin{abstract}
Let $(X,\omega)$ be a compact K\"ahler manifold and let $V\subset X$ be a closed complex submanifold. Coman-Guedj-Zeriahi proposed the problem: is every $\omega|_V$-plurisubharmonic function on $V$ the restriction of an $\omega$-plurisubharmonic function on $X$? In this paper, we solve this problem affirmatively, even for a compact Hermitian manifold.
		
	\end{abstract}
	
	\maketitle
	\tableofcontents
	\section{Introduction}
	
	Let $(X,\omega)$ be a compact K\"ahler manifold and $V\subset X$ a closed complex submanifold. Throughout this paper, we assume that $V\neq X$.  A function $u\colon X\to[-\infty,+\infty)$ is called quasi-plurisubharmonic (qpsh for short) if locally it can be written as the sum of a plurisubharmonic function and a smooth function. We fix the following notations: $d^c:=\frac{i}{2\pi}(\bar\partial-\partial)$ and $\omega_V:=\omega|_V$.
Define $\QPSH(X):=\{u: u\ \text{is qpsh on}\  X\}$, 
$\PSH(X,\omega) :=\{u\in\QPSH(X):\omega+\ddc u\geq0\}$ and $\PSH(X,\omega,\delta) :=\{u\in\QPSH(X):\omega+\ddc u\geq\delta\omega\}$.

	In this paper, we study  the following  problem raised by Coman-Guedj-Zeriahi.
	\begin{prob}[{\cite[p. 40]{CGZ13},\cite[Problem 37]{DGZ}}]\label{prob:1}
		Does the following hold
		\begin{align*}
			\mbox{PSH}(V,\omega|_V)=\mbox{PSH}(X,\omega)|_V?
		\end{align*}
	\end{prob}
	
	A related and strict problem asks whether every strictly $\omega_V$-plurisubharmonic potential on $V$ is the restriction of a strictly $\omega$-plurisubharmonic potential on $X$, or equivalently,  whether every K\"ahler current in $[\omega_V]$ extends to a K\"ahler current in $[\omega]$. Here  $[\omega_V]$ (resp. $[\omega]$) is the Bott-Chern cohomology class of $\omega_V$ (resp. $\omega$) in $H^{1,1}(V,\mathbb R)$ (resp.  $H^{1,1}(X,\mathbb R)$).
	
	Several cases were known. The Hodge case was obtained in \cite{CGZ13}; the real N\'eron--Severi case, including singular analytic subvarieties, was treated in \cite{CGZ22}. Smooth strictly plurisubharmonic functions extend by \cite[Proposition~2.1]{CGZ13}, see also \cite{Sch98}. Collins-Tosatti proved the strict extension theorem for currents with analytic singularities \cite{CT14} (see also \cite{DRWXZ26} for a refinement). Ning-Wang-Zhou proved the arbitrary-singularity strict theorem under the existence of a holomorphic retraction from a neighborhood of $V$ onto $V$ \cite{NWZ24}, see also the earlier tubular-neighborhood result by Wang-Zhou \cite{WZ}.
	
	In this paper, we give a positive answer to the \cref{prob:1} in  full generality.
	
	\begin{thm}\label{thm:main}
		Let $(X,\omega)$ be a compact Hermitian manifold and let $V\subset X$ be a closed complex submanifold. Then every $\varphi\in\PSH(V,\omega_V)$ admits an extension $\Phi\in\PSH(X,\omega)$ such that $\restr{\Phi}{V}=\varphi$.
		Equivalently, $\PSH(X,\omega)|_V=\PSH(V,\omega_V)$.
	\end{thm}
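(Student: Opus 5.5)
We will reduce the statement to the case of strictly $\omega_V$-plurisubharmonic functions, for which we then prove a strict extension theorem, and finally pass to the limit.

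\emph{Reduction.} Given $\varphi\in\PSH(V,\omega_V)$, we may assume $\varphi\le -1$. For $\varepsilon\in(0,1)$ set $\varphi_\varepsilon:=(1-\varepsilon)\varphi$. Then $\omega_V+\ddc\varphi_\varepsilon\ge\varepsilon\omega_V$, so $\varphi_\varepsilon\in\PSH(V,\omega_V,\varepsilon)$. Suppose we can find $\Phi_\varepsilon\in\PSH(X,\omega)$ with $\Phi_\varepsilon|_V=\varphi_\varepsilon$ and $\sup_X\Phi_\varepsilon\le C$, where $C$ does not depend on $\varepsilon$. We would then take $\Phi:=(\limsup_{\varepsilon\to0}\Phi_\varepsilon)^*$, or better a decreasing regularized envelope. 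However, the usc regularization may increase the values on $V$, which is a pluripolar set, so this needs care. A cleaner route is to use $\Phi:=\sup\{u\in\PSH(X,\omega): u|_V\le\varphi\}^*$. Once extensions of the $\varphi_\varepsilon$ exist, the envelope satisfies $\Phi|_V\ge\varphi$. The inequality $\Phi|_V\le\varphi$ comes from a Hartogs/maximum-type argument: the usc regularization of a family that is bounded above by $\varphi$ on $V$ keeps it bounded by $\varphi$ at points of $V$. This has to be checked via local psh restriction and the fact that $\varphi$ is usc on $V$.

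\emph{Strict extension with arbitrary singularities.} The core step is the following. Let $\psi\in\PSH(V,\omega_V,\delta)$. We want $\Psi\in\PSH(X,\omega,\delta')$ with $\Psi|_V=\psi$. The plan follows Coman--Guedj--Zeriahi and Collins--Tosatti, in three steps.
\begin{enumerate}[(i)]
\item Use Demailly regularization on $V$ to write $\psi$ as a decreasing limit of functions $\psi_j\in\PSH(V,\omega_V,\delta/2)$ with analytic singularities, or even smooth ones, since $\omega_V$ loses only a little positivity.
\item Extend each $\psi_j$ to $\Psi_j$ on $X$ with $\omega+\ddc\Psi_j\ge\delta''\omega$, uniformly in $j$. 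For the local extension we use a finite cover by coordinate charts $U_\alpha$ in which $V=\{z''=0\}$, and set $\psi_j(z')+A|z''|^2$ minus a smooth correction. We glue these with the maximum construction, using $\log|z''|^2$-type barrier terms (a function $\rho=\log d(\cdot,V)^2$ which is $\omega$-qpsh with $\ddc\rho\ge -C\omega$) to force the local pieces to dominate near $V$ and to be dominated away from $V$. The Hermitian (non-closed) setting plays no role here, because all the conditions are pointwise inequalities on forms.
\item Take the limit $\Psi:=(\lim\downarrow\Psi_j)$. To make the sequence decreasing, replace $\Psi_j$ by $\max(\Psi_j,\ldots)$ built inductively, in the spirit of Coman--Guedj--Zeriahi's \cite{CGZ13} proof for the Hodge case: given $\Psi_j$ extending $\psi_j$, construct $\Psi_{j+1}\le\Psi_j+\varepsilon_j$ extending $\psi_{j+1}$, by taking a max with $\Psi_j - 2^{-j}+M_j\rho$ restricted suitably.
\end{enumerate}

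\emph{Main obstacle.} The key difficulty is step (ii)/(iii): the extension constants must be \emph{uniform} in $j$ while the singularities of $\psi_j$ get worse. This is exactly where previous results needed either analytic singularities or a holomorphic retraction. My approach would be to avoid any retraction by using the local product charts and a smooth (non-holomorphic) projection $\pi$ onto $V$ in a tubular neighborhood. We take $\psi_j\circ\pi$ plus $A d(\cdot,V)^2$. The error $\ddc(\psi_j\circ\pi)-\pi^*\ddc\psi_j$ involves $\bar\partial\pi$, which vanishes on $V$ to first order. Because the $\psi_j$ are smooth, this error is bounded by $C_j d(\cdot,V)\,|\nabla\psi_j|$-type terms, and these are not uniform in $j$. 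To control them, I would shrink the neighborhood to width $r_j$ depending on $j$, and compensate with the $\log$-barrier $M\rho$ with a \emph{fixed} coefficient. This relies on the fact that $\psi_j\ge\psi$ and that the bound $\sup\psi_j\le 0$ is uniform, so that gluing only requires a lower bound on the piece near $V$, and this lower bound decreases in a controlled way through the inductive $\varepsilon_j$. I expect this balancing to be the crux of the proof.
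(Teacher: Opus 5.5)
There are two genuine gaps: the reduction to the strict case is false, and the step you call the crux is left open.

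\textbf{The reduction.} The envelope $\sup\{u\in\PSH(X,\omega):u|_V\le\varphi\}^*$ cannot see $\varphi$, because $V$ is pluripolar. Let $F\le0$ be an $\omega$-psh function with logarithmic poles exactly along $V$. Then $F+c$ is admissible for every $c\in\mathbb R$, so the envelope is $+\infty$ off $V$. Even with a normalization $\sup_X u\le0$, the admissible functions $\frac1k F$ increase to $0$ on $X\setminus V$, so the regularized envelope is $\ge0$ on $V$ whatever $\varphi$ is. (If you exclude functions that are $\equiv-\infty$ on $V$, use $\max\{v,\frac1k F\}$ with $v$ admissible.) So the ``Hartogs-type'' inequality $\Phi|_V\le\varphi$ is false, and the same objection applies to $(\limsup\Phi_\varepsilon)^*$. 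The lower bound is not justified either: with $\varphi\le-1$ you have $\Phi_\varepsilon|_V=(1-\varepsilon)\varphi\ge\varphi+\varepsilon$, so the $\Phi_\varepsilon$ are not competitors. Values on $V$ can only be controlled by an exactly decreasing sequence with exact traces. Once you have such a scheme, strictness is irrelevant: the paper proves the non-strict theorem directly, and gets the strict one by applying it to $(1-\varepsilon')\omega$. That is the reverse of your reduction.

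\textbf{The core step.} The uniformity you try to secure is not needed, and your proposed fix points the wrong way. A log barrier cut off to a neighbourhood of width $r_j$ has a negativity constant $C_{r_j}$ that blows up as $r_j\to0$. A \emph{fixed} coefficient $M$ therefore destroys positivity. What works is a coefficient $\lambda_j$ chosen \emph{after} $C_{r_j}$, as small as you like.

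The paper proceeds as follows.
\begin{enumerate}[(a)]
\item It takes smooth $\psi_j\in\PSH(V,\omega_V,2^{-j})$ decreasing to $\varphi$ with a built-in gap $\psi_{j-1}-\psi_j\ge2^{-j}$. Concretely, $\psi_j=(1-2^{-j})\varphi_j+2^{-j}$, where $\varphi_j\le-3$ are smooth approximants.
\item It extends each $\psi_j$ to $H_j=\widetilde\psi_j+A_j\,\mathrm{dist}(\cdot,V)^2$ on a $j$-dependent neighbourhood, with $\omega+\ddc H_j\ge\eta_j\omega$.
\item It picks a neighbourhood $O_j$ of $V$ with $H_j<G_{j-1}$ on $\overline{O_j}$; this is exactly what the gap buys. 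It sets $G_j:=\max\{H_j,G_{j-1}+\lambda_jb_j\}$ on $O_j$ and $G_j:=G_{j-1}+\lambda_jb_j$ outside, where $b_j\le0$ is a log barrier supported in $O_j$ and $\lambda_j$ is small.
\end{enumerate}
Then $G_j\le G_{j-1}$, $G_j|_V=\psi_j$ exactly, and $G_j\in\PSH(X,\omega,\delta_j)$, with $\delta_j\to0$ permitted. Hence $\lim G_j\in\PSH(X,\omega)$ and its trace on $V$ is $\varphi$.

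Your step (iii) already contains this gluing, and nothing in it has to be uniform in $j$. Note, however, that your version $\max\{H_{j+1},\Psi_j-2^{-j}+\dots\}$ only glues if $\psi_{j+1}<\psi_j-2^{-j}$ near the edge of the gluing region. Regularization alone does not give this, so the gap must be manufactured as in (a).
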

	
	As a direct application, we get the following theorem.
	
	\begin{thm}\label{thm:strict}
		Let $(X,\omega)$ and $V$ be as in \cref{thm:main}. Suppose that $\varphi\in\PSH(V,\omega_V,\varepsilon)$
		for some $\varepsilon>0$. Then there are $\varepsilon'>0$ and $\Phi\in\PSH(X,\omega,\varepsilon')$  such that
		$\restr{\Phi}{V}=\varphi$.
		Consequently, if $\omega$ is a K\"ahler metric, then every K\"ahler current in $[\omega_V]$ is the restriction of a K\"ahler current in $[\omega]$.
	\end{thm}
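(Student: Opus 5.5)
The plan is to deduce \cref{thm:strict} from \cref{thm:main} by splitting off a small multiple of a smooth strictly $\omega$-psh function that vanishes on $V$. The key auxiliary object is a smooth function $\rho$ on $X$ with $\rho|_V\equiv 0$ and $\omega+\ddc\rho\ge 2c\,\omega$ near $V$, or at least one dominating the normal directions. A cleaner route avoids $\rho$ entirely and uses the hypothesis directly. Write $\omega_V+\ddc\varphi\ge\varepsilon\omega_V$, so that $(1-\varepsilon)\omega_V+\ddc\varphi\ge0$. Hence $\psi:=\varphi/(1-\varepsilon)\in\PSH(V,\omega_V)$ (assuming $\varepsilon<1$, which we may). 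Apply \cref{thm:main} to get $\Psi\in\PSH(X,\omega)$ with $\Psi|_V=\psi$. Then $(1-\varepsilon)\Psi$ restricts to $\varphi$ and satisfies
\[
\omega+\ddc\bigl((1-\varepsilon)\Psi\bigr)=\varepsilon\omega+(1-\varepsilon)(\omega+\ddc\Psi)\ge\varepsilon\omega .
\]
So $\Phi:=(1-\varepsilon)\Psi$ works with $\varepsilon'=\varepsilon$. This scaling argument is the whole proof of the first assertion, and I expect it to be essentially all that is needed.

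For the Kähler-current statement, assume $\omega$ is Kähler. A Kähler current $T\in[\omega_V]$ can be written $T=\omega_V+\ddc\varphi$ with $T\ge\varepsilon\omega_V$ for some $\varepsilon>0$. This uses the $\ddc$-lemma on the compact Kähler manifold $V$: $T$ and $\omega_V$ are closed $(1,1)$-currents in the same Bott--Chern class, so they differ by $\ddc$ of a quasi-psh function, namely $\varphi$, which lies in $\PSH(V,\omega_V,\varepsilon)$. The first part gives $\Phi\in\PSH(X,\omega,\varepsilon')$ with $\Phi|_V=\varphi$. Then $\omega+\ddc\Phi$ is a closed positive current on $X$ that is $\ge\varepsilon'\omega$, hence a Kähler current in $[\omega]$.

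It remains to justify that this current restricts to $T$. Restriction of currents to $V$ is interpreted as $\omega_V+\ddc(\Phi|_V)$. This is well defined because $\Phi|_V=\varphi\not\equiv-\infty$ on each component of $V$, so $(\omega+\ddc\Phi)|_V=\omega_V+\ddc\varphi=T$.

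The only potential subtlety is bookkeeping rather than substance. We must make sure $V$ may be disconnected or of varying dimension, which is harmless since \cref{thm:main} is stated for arbitrary closed submanifolds. We also need $\varphi\not\equiv-\infty$ on components, which holds by the definition of qpsh functions. No step is a real obstacle; all the difficulty sits in \cref{thm:main}.
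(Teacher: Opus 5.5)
Your argument is correct and is essentially the paper's: the paper applies \cref{thm:main} to the rescaled Hermitian metric $(1-\varepsilon')\omega$ with $0<\varepsilon'<\varepsilon$, while you rescale the potential to $\varphi/(1-\varepsilon)$ and multiply the extension back by $1-\varepsilon$, which is equivalent. Your treatment of the K\"ahler-current consequence, which the paper leaves implicit, is also fine; since $[\omega_V]$ is a Bott--Chern class, $T=\omega_V+\ddc\varphi$ holds by definition, so the $\ddc$-lemma is not needed.
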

	\begin{rem}
		\cref{thm:main} and \cref{thm:strict} do not require $(X,\omega)$ to be compact K\"ahler.     To prove \cref{thm:main}, we follow the strategy of \cite{NWZ24} while introducing a new technique. The main novelty over previous results \cite{NWZ24} is that we do not insist on  simultaneously extending  the smooth approximations $\varphi_j$ of $\varphi$ on $V$ to a common neighborhood of $V$ with uniform estimate, and  instead, we relax the uniformity of extending data while preserving monotonicity. The  argument could apply to the non-smooth case, \textit{i.e.}, $V$ a subvariety of $X$. This will be studied in a subsequent paper.
	\end{rem}
	
	\subsection*{Acknowledgements}
	This research is supported by the National Key R\&D Program of China (Grant No. 2021YFA1002600 and  No. 2021YFA1003100).  J. Ning, Z. Wang and X. Zhou are partially supported by grants from the National Natural Science Foundation of China
(NSFC)(No. 12071485), (No. 12571085) and (No. 12288201) respectively. Z. Wang is also supported by the Fundamental Research Funds for the Central Universities.
	
	\section{Preliminaries}
Throughout this paper, we assume that
	$(X,\omega)$ is a compact Hermitian manifold with $\omega$ a Hermitian metric, $V\subset X$ be a complex submanifold, and $V\neq X$.
A function $u\colon X\to[-\infty,+\infty)$ is called quasi-plurisubharmonic (qpsh for short) if locally it can be written as the sum of a plurisubharmonic function and a smooth function. We fix the following notations: $d^c:=\frac{i}{2\pi}(\bar\partial-\partial)$ and $\omega_V:=\omega|_V$.
Define $\QPSH(X):=\{u: u\ \text{is qpsh on}\  X\}$, $\PSH(X,\omega) :=\{u\in\QPSH(X):\omega+\ddc u\geq0\}$ and $\PSH(X,\omega,\delta) :=\{u\in\QPSH(X):\omega+\ddc u\geq\delta\omega\}$.
	
	Following the convention in \cite{CGZ22}, if $V$ is disconnected we allow a function in $\PSH(V,\omega_V)$ to be identically $-\infty$ on some, but not all, connected components. The notation $\PSH(X,\omega)|_V$ refers to restrictions which are not identically $-\infty$ on all of $V$. In the strict class, no connected component is allowed to carry the value $-\infty$ identically.
	
	We repeatedly use the following standard facts.
	
	\begin{enumerate}[label=(\roman*)]
		\item If $u,v\in\PSH(U,\omega)$ on an open set $U$, then $\max\{u,v\}\in\PSH(U,\omega)$.
		Moreover, if $u,v\in \PSH(U,\omega,\delta)$, then $\max\{u,v\}\in \PSH(U,\omega,\delta)$.
		\item A decreasing sequence in $\PSH(X,\omega)$ either converges identically to $-\infty$ on each connected component, or its pointwise limit belongs to $\PSH(X,\omega)$.
	\end{enumerate}
	
	For a qpsh function $u$ whose restriction to $V$ is not identically $-\infty$ on any connected component, the restriction current is understood through local potentials:
	\[
	i^*(\omega+\ddc u):=\omega_V+\ddc(\restr{u}{V}),
	\]
	where $i\colon V\hookrightarrow X$ denotes the inclusion.

	The following approximation theorem is a consequence of  \cite[Theorem 2.3]{DPS01} or   \cite[Theorem 3.2]{DP04}, see also \cite[Theorem 1]{BK07}.
	
	\begin{thm}[c.f. \cite{DPS01,DP04,BK07}]\label{thm: appro}
		Let $(X,\omega)$ be a compact Hermitian manifold with $\omega$ a Hermitian metric. Let $\varphi\in \mbox{PSH}(X,\omega)$. Then there is a  sequence of smooth functions $\varphi_j\in \mbox{PSH}(X,\omega)$ deceasing to $\varphi$ on $X$.
	\end{thm}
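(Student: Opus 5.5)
This approximation statement is presented as a consequence of the cited results, so the plan is to reduce it to the regularization theorems of Demailly--Peternell--Schneider and Demailly--Paun, or to Blocki--Kolodziej. The wrinkle is a loss of positivity. Regularization on a compact Hermitian manifold produces smooth functions $\psi_{j}$ decreasing to $\varphi$ with $\omega+\ddc\psi_j\geq-\varepsilon_j\omega$, where $\varepsilon_j\downarrow0$. The task is to remove this error while keeping monotonicity. Throughout I assume $X$ is connected (otherwise argue componentwise), and normalize $\varphi\le 0$ after subtracting a constant.

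\emph{Step 1 (regularization).} On a Hermitian manifold, Demailly's exponential-map regularization (\cite[Theorem 3.2]{DP04}, whose proof uses no closedness of $\omega$) gives the following. For the quasi-psh $\varphi$ with $\omega+\ddc\varphi\ge0$, there are smooth $\psi_j\downarrow\varphi$ with $\omega+\ddc\psi_j\ge -\varepsilon_j\omega$ and $\varepsilon_j\downarrow0$. The error comes from the Lelong-number term, which vanishes here because we only need a bound on the negative part, and from the curvature of the metric, which tends to $0$ with the regularization scale. Alternatively, Blocki--Kolodziej \cite[Theorem 1]{BK07} directly gives smooth $\psi_j\downarrow\varphi$ with $\omega+\ddc\psi_j\ge-\varepsilon_j\omega$ for any smooth real $(1,1)$-form in place of $\omega$.

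\emph{Step 2 (absorbing the error).} Write $\varphi=\lim\psi_j$ and set $\varphi_j:=(1-\varepsilon_j)\psi_j$ when $\varepsilon_j<1$. Then
\[
\omega+\ddc\varphi_j=(1-\varepsilon_j)(\omega+\ddc\psi_j)+\varepsilon_j\omega\ge(1-\varepsilon_j)(-\varepsilon_j\omega)+\varepsilon_j\omega=\varepsilon_j^2\omega\ge0,
\]
so $\varphi_j\in\PSH(X,\omega)$ is smooth.

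\emph{Step 3 (monotonicity).} After replacing $\psi_j$ by $\psi_j-\eta_j$, we may assume $\psi_j\le0$. Here $\eta_j\downarrow0$ are constants chosen so that $\sup\psi_j\le\eta_j$, which is possible by Hartogs and $\sup\psi_j\to\sup\varphi\le0$; the decrease is preserved if we also ensure $\psi_j-\eta_j$ is decreasing, by choosing $\eta_j$ decreasing and passing to a subsequence if needed. Then for $\psi_{j+1}\le\psi_j\le0$ and $\varepsilon_{j+1}\le\varepsilon_j$,
\[
(1-\varepsilon_{j+1})\psi_{j+1}\le(1-\varepsilon_{j+1})\psi_j\le(1-\varepsilon_j)\psi_j,
\]
using $\psi_j\le0$ for the second inequality. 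Hence $\varphi_j$ is decreasing. Its limit is $\varphi$, since $\varepsilon_j\to0$ and $\psi_j\downarrow\varphi$, with the case $\varphi=-\infty$ at a point handled trivially. The constant shifts also tend to zero.

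\emph{Main obstacle.} The delicate point is Step 1 in the non-K\"ahler setting: checking that the regularization error is $-\varepsilon_j\omega$ with $\varepsilon_j\to0$ uniformly. This holds when there is no requirement to preserve Lelong numbers, since one takes the regularization parameter to $0$. I would cite \cite[Theorem 1]{BK07}, which is stated for arbitrary compact Hermitian manifolds, or equivalently \cite[Theorem 2.3]{DPS01}. The remainder of the argument is the elementary convex-combination trick above.
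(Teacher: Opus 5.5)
The paper gives no argument beyond citing \cite{DPS01,DP04,BK07}, so the whole content of this statement lies in your Step~1. The justification you give there is wrong, and the idea that makes Step~1 true is missing. Your Step~2 is fine.

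Neither form of Demailly's regularization yields smooth approximants with loss tending to $0$. The exponential-map version needs $c$ above all Lelong numbers to make the approximants smooth, and then it only gives $\omega+\ddc\psi_{c,\varepsilon}\ge-\lambda_{c,\varepsilon}u-\delta_\varepsilon\omega$. Here $u\ge0$ controls the Chern curvature of $TX$ and $\lambda_{c,\varepsilon}(x)\to\nu(\varphi,x)$. So the curvature error is multiplied by the Lelong number and does not disappear as the scale shrinks; ``only needing the negative part'' does not remove it. The Bergman-kernel version in \cite{DPS01,DP04} does give loss $\varepsilon_k\omega\to0$ with $\psi_k\searrow\varphi$, but those $\psi_k$ have analytic (logarithmic) singularities, so they are not smooth.

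Your claim that \cite[Theorem 1]{BK07} works with an arbitrary smooth real $(1,1)$-form in place of $\omega$ is also false. Let $E$ be a $(-1)$-curve on a compact K\"ahler surface, $\alpha$ a smooth representative of $c_1(\mathcal O(E))$, and $\alpha+\ddc\varphi=[E]$. Smooth $\psi_j$ with $\alpha+\ddc\psi_j\ge-\varepsilon_j\omega$ would give $-1=\int_E(\alpha+\ddc\psi_j)\ge-\varepsilon_j\int_E\omega\to0$. Positivity of $\omega$ is essential, and your Step~1 never uses it.

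The missing idea is truncation followed by Richberg smoothing, which is where $\omega>0$ enters. The argument runs as follows.
\begin{enumerate}[label=(\alph*)]
\item Take the analytic-singularity approximants $\psi_k\searrow\varphi$ with $\omega+\ddc\psi_k\ge-\varepsilon_k\omega$, and put $u_k:=\max\{\psi_k,-k\}$. These are continuous, still decrease to $\varphi$, and keep the same loss.
\item Normalize $\sup_X\varphi\le-1$. Since $\sup_X\psi_k\searrow\sup_X\varphi$ for decreasing usc functions on a compact space, $u_k\le0$ for large $k$.
\item Apply your scaling trick in the form $v_k:=(1-2\varepsilon_k)u_k$. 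This gives continuous decreasing $v_k$ with $\omega+\ddc v_k\ge\varepsilon_k\omega$.
\item Richberg's regularization needs only continuity and this positive margin. It yields smooth $w_k$ with $v_k+2^{-k}\le w_k\le v_k+2^{-k}+2^{-k-3}$ and $\omega+\ddc w_k\ge\frac{\varepsilon_k}{2}\omega$. These satisfy $w_{k+1}<w_k$ and $w_k\searrow\varphi$.
\end{enumerate}
Alternatively, \cite[Theorem 1]{BK07} is already the statement for compact Hermitian $\omega$ with no loss, and then your Steps~2--3 are superfluous.

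Your normalization in Step~3 also fails as written. Subtracting constants $\eta_j\searrow0$ can destroy $\psi_{j+1}\le\psi_j$ wherever $\psi_{j+1}=\psi_j$, and passing to a subsequence does not help. Normalizing $\sup_X\varphi\le-1$ and discarding finitely many terms, as in (b), avoids shifts altogether.
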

	
	We need the  following lemma.
	\begin{lem}[c.f. \cite{DP04}]\label{reference function}
		There exists an exponentially continuous  qpsh function $F:X\rightarrow [-\infty, +\infty)$ which is smooth on $X\setminus V$, with logarithmic singularities along $V$, and such that $F\in \PSH(X,\omega, \varepsilon)$.
		By subtracting a large constant, we can make $F\leq 0$ on $X$.
	\end{lem}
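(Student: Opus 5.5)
We would construct $F$ by patching local logarithmic potentials of the ideal sheaf of $V$ with a partition of unity, and then adding a small multiple to a large multiple of $\omega$ after rescaling. Since $V$ is a closed submanifold, cover $X$ by finitely many coordinate charts $U_\alpha$, $\alpha=1,\dots,N$. On those meeting $V$, choose holomorphic functions $g_{\alpha,1},\dots,g_{\alpha,k}$ ($k=\operatorname{codim}V$) generating the ideal of $V$ on a neighborhood of $\overline{U_\alpha}$. Take a partition of unity $\theta_\alpha$ subordinate to $\{U_\alpha\}$ and set
\[
\psi:=\frac12\log\Big(\sum_\alpha \theta_\alpha^2\sum_{l}|g_{\alpha,l}|^2\Big),
\]
or, equivalently, $\psi=\frac12\log|s|_h^2$ for a smooth Hermitian norm on the conormal data. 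On charts meeting neither $V$ nor its neighborhood, take $g_{\alpha,1}=1$. The key property is that on each $U_\beta$ we have $\psi=\frac12\log\sum_l|g_{\beta,l}|^2+O(1)$ with a smooth bounded error. This holds because the ratios of the sums of squares are smooth and positive, as the $g$'s generate the same ideal. Hence $e^\psi$ is continuous, $\psi$ is smooth off $V$, and $\psi$ has logarithmic poles along $V$.

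Next we show that $\ddc\psi\ge -C\omega$ on $X$. Write $\sum_\alpha\theta_\alpha^2\sum_l|g_{\alpha,l}|^2=\sum_\alpha|\theta_\alpha G_\alpha|^2$ on $U_\beta$, with $G_\alpha=A_{\alpha\beta}G_\beta$ for a holomorphic matrix $A_{\alpha\beta}$. Then the quantity is $\langle H G_\beta,G_\beta\rangle$ with $H$ a smooth, positive definite Hermitian matrix. The standard computation (Demailly's lemma for $\log$ of a Hermitian norm of a holomorphic section) gives $\ddc\frac12\log|G_\beta|_H^2\ge -\frac12\,\Theta_H$-type term $\ge -C\omega$, uniformly, by compactness. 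The Hermitian (non-Kähler) setting does not matter here, because only the inequality $\ddc\psi\ge -C\omega$ in the sense of currents is used, and $\omega$ is positive definite.

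Finally, set $F:=\frac{1-\varepsilon}{C}\psi$ for fixed small $\varepsilon\in(0,1)$. This gives
\[
\omega+\ddc F\ge \omega-(1-\varepsilon)\omega=\varepsilon\omega.
\]
The function $F$ is still qpsh, with logarithmic singularities along $V$ (positive coefficient), smooth off $V$, and $e^F$ continuous. Because $\psi$ is bounded above on compact $X$, subtracting a constant gives $F\le 0$, and constants do not change $\ddc F$.

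The main obstacle is the uniform lower bound $\ddc\psi\ge -C\omega$ across chart overlaps, i.e. showing that the patched log-potential is genuinely quasi-psh near $V$ and not just locally $\log|g|+O(1)$. The error term is only bounded, not smooth with controlled Hessian, unless we organize the construction as a Hermitian norm of a holomorphic section as above. That is why we would use the matrix form $|G_\beta|_H^2$ and the curvature formula rather than a naive partition-of-unity sum of logarithms.
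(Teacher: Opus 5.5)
Your proposal is correct and is essentially the standard Demailly--Paun construction that the paper only cites. You patch $\frac12\log\sum_\alpha\theta_\alpha^2\sum_l|g_{\alpha,l}|^2$ from local generators of the ideal of $V$, get $\ddc\psi\ge -C\omega$ by writing it locally near $V$ as $\frac12\log|G_\beta|_H^2$ with $G_\beta$ holomorphic and $H$ smooth and positive definite near $V$ (away from $V$, $\psi$ is smooth), and rescale by $(1-\varepsilon)/C$.

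Two small corrections. When $\operatorname{codim}V\ge 2$, the ratio of the two sums of squares is bounded above and below but not smooth, not even continuous along $V$; this is harmless since you only use its boundedness. Also, the holomorphic matrices $A_{\alpha\beta}$ need only exist locally, which suffices because $\ddc\psi\ge-C\omega$ is a local statement.
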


	Recall that	a function $F:X\rightarrow [-\infty,+\infty)$ is said to be exponentially continuous, if $e^F:X\rightarrow [0,+\infty)$ is continuous.

Multiplying the above function $F$ by a cut-off function, we obtain the following lemma.
	
	\begin{lem}\label{lem:barrier}
		Let $W$ be an open neighbourhood of $V$.	There exist an exponentially continuous qpsh function $b_W\colon X\longrightarrow[-\infty,0]$
		and a constant $C_W\geq0$ such that $b_W$ is smooth on $X\setminus V$, with logarithmic singularities along $V$ and such that $b_W=0$ on a neighbourhood of $X\setminus W$ and $\ddc b_W\geq-C_W\omega$.
	\end{lem}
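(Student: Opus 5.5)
We take the function $F$ of \cref{reference function} and cut it off near $V$. Since $V$ is compact, choose open sets $V\subset W'\Subset W''\Subset W$ and a smooth cut-off $\chi\colon X\to[0,1]$ with $\chi\equiv1$ on $W'$ and $\supp\chi\subset W''$. The product $\chi F$ is not smooth where $F=-\infty$, and it does not vanish on a neighbourhood of $X\setminus W$ unless $\chi$ does. We therefore truncate through a convex function instead of multiplying.

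First, since $F\le0$ is smooth on $X\setminus V$, exponentially continuous and equal to $-\infty$ exactly on $V$, the set $\{F\ge -A\}$ is a compact subset of $X\setminus V$ for every $A$. The sets $\{F<-A\}$ are open, decrease to $V$ as $A\to\infty$ (by continuity of $e^F$ and compactness), and so $\{F<-A\}\subset W$ once $A$ is large. Fix such an $A$ with $\overline{\{F<-A\}}\subset\{F\le -A\}\subset W$; closedness of $\{F\le -A\}$ again follows from continuity of $e^F$.

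Second, let $\theta\colon[-\infty,+\infty)\to[-\infty,0]$ be smooth on $\mathbb R$, convex and nondecreasing, with $\theta(t)=t+A+1$ for $t\le -A-2$ and $\theta(t)=0$ for $t\ge -A$. For example, take a smoothing of $\min(0,\max(\cdot))$ or a regularized $\max(t+A+1,0)$ whose corners are smoothed inside $(-A-2,-A)$. Set $b_W:=\theta\circ F$. Then $b_W\le0$, and $b_W=0$ on the open set $\{F>-A-\tfrac12\}$ if we arrange $\theta=0$ on $[-A-\tfrac12,\infty)$. That open set contains $X\setminus\{F\le -A-\tfrac12\}\supset X\setminus W$, so $b_W$ vanishes on a neighbourhood of $X\setminus W$. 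Near $V$ we have $b_W=F+A+1$, which gives the logarithmic singularities along $V$ and exponential continuity there. On $X\setminus V$, $b_W$ is a composition of smooth maps, hence smooth. Globally $e^{b_W}=e^{\theta(F)}$ is continuous, since $\theta$ extends continuously to $-\infty$.

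Third, the curvature bound. On $X\setminus V$,
\[
\ddc b_W=\theta'(F)\,\ddc F+\theta''(F)\,dF\wedge d^cF\ \ge\ \theta'(F)\,\ddc F,
\]
using $\theta''\ge0$ and $dF\wedge d^cF\ge0$. Since $F\in\PSH(X,\omega,\varepsilon)$ gives $\ddc F\ge(\varepsilon-1)\omega\ge-\omega$, and $0\le\theta'\le1$, we get $\ddc b_W\ge-\omega$ on $X\setminus V$. Near $V$, $b_W=F+\mathrm{const}$ is qpsh with $\ddc b_W=\ddc F\ge-\omega$ in the sense of currents. The constant $C_W=1$ therefore works globally; qpsh-ness across $V$ holds because the local statement near $V$ is just that for $F$.

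The only delicate point is checking that the region where $\theta$ is non-affine stays away from $V$ and inside $W$. This is exactly the compactness argument in the first step. Convexity of $\theta$ is what lets us avoid any uncontrolled gradient terms, which a multiplicative cut-off $\chi F$ would produce.
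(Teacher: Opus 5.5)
\textbf{There is a genuine error in the choice of $\theta$, and with it the curvature step and the claim $C_W=1$ fail.} A convex function has nondecreasing derivative. You require $\theta'=1$ on $(-\infty,-A-2]$ and $\theta'=0$ on $[-A,\infty)$, so no such convex $\theta$ exists. The profile you need (keep $F$ near the pole, flatten to $0$ away from it) is a smoothed $\min(t+A+1,0)$, which is \emph{concave}. Your example, a smoothed $\max(t+A+1,0)$, is convex but has the reversed profile: it is $0$ for $t\ll 0$, so $\theta\circ F$ would erase the pole, and it is positive on $\{F>-A-1\}\supset X\setminus W$. With the correct concave $\theta$ you have $\theta''(F)\,dF\wedge d^cF\le 0$, so the inequality $\ddc b_W\ge\theta'(F)\,\ddc F$ fails, and so does $C_W=1$.

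The bound $C_W=1$ is in fact false for small $W$. Let $D$ be a small holomorphic disc meeting $V$ transversally only at $p$, and take $W$ thin enough that $b_W=0$ near $\partial D$. Then $\int_D\ddc(b_W|_D)=0$. On the other hand, $\ddc(b_W|_D)+C_W\,\omega|_D\ge 0$ has an atom at $p$ of mass at least the Lelong number $\nu>0$ of $F$ at $p$. Hence $C_W\int_D\omega\ge\nu$, which forces $C_W\to\infty$ as $W$ shrinks to $V$.

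\textbf{The repair is easy and uses only your (correct) first step.} Take $\theta$ smooth, concave and nondecreasing, with $\theta(t)=t+A+1$ for $t\le -A-2$ and $\theta=0$ for $t\ge -A-\frac12$; mollifying $\min(t+A+1,0)$ at scale $<\frac12$ works. The transition set $K=\{-A-2\le F\le -A-\frac12\}$ is a compact subset of $X\setminus V$, where $F$ is smooth. So $dF\wedge d^cF\le M\omega$ on $K$ for some $M$, and $\ddc b_W\ge-\bigl(1+M\sup|\theta''|\bigr)\omega$ there. This gives a $W$-dependent constant $C_W$, which is all the lemma asks for.

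\textbf{Comparison with the paper.} Once repaired, your argument is the paper's in disguise. The paper takes $b_W=\chi F$ with $\chi\in C_c^\infty(W_1)$, $W_1\Subset W$, and $\chi=1$ on a neighbourhood $W_0$ of $V$. Your objections to $\chi F$ do not hold. Smoothness is required only on $X\setminus V$. The condition $\supp\chi\Subset W$ already makes $\chi F$ vanish on a neighbourhood of $X\setminus W$. The extra terms $F\,\ddc\chi$, $d\chi\wedge d^cF$ and $dF\wedge d^c\chi$ live on $\supp d\chi$, a compact subset of $X\setminus V$ where $F$ is smooth. So they are bounded by exactly the compactness argument you need to fix $\theta$.
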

	
	\begin{proof}
		Choose open neighborhoods
		\[
		V\Subset W_0\Subset W_1\Subset W.
		\]
		
		Choose $\chi\in C_c^\infty(W_1)$ with $0\leq\chi\leq1$ and $\chi=1$ on $W_0$. Define
		\[
		b_W:=
		\begin{cases}
			\chi F,&\text{on }N,\\
			0,&\text{on }X\setminus\supp\chi,
		\end{cases}
		\]
		where $F$ is the qpsh chosen by \cref{reference function}.
		It is easy to check that $b_W$ satisfies the desired properties.
	\end{proof}

	\section{Local smooth extension}
	
	In this section, we introduce a local smooth extension, which will serve as a candidate in the global patching procedure.
	
	\begin{lem}\label{lem:local-smooth}
		Fix $\varepsilon>0$. For any  $0<\eta<\frac{\varepsilon}{2}$,  and  any  $f\in \mathcal C^\infty(V)\cap\PSH(V,\omega_V,\varepsilon)$, there are an open neighborhood $U_f$ of $V$ and a function $H_f\in \mathcal C^\infty(U_f)\cap\PSH(U_f,\omega,\eta)$ such that $\restr{H_f}{V}=f$ on $U_f$.
	\end{lem}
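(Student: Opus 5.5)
Since $f$ is smooth, the plan is to build $H_f$ by hand: take any smooth extension of $f$ and add a large multiple of the squared distance to $V$. This term vanishes to second order along $V$, so it does not change the restriction, and it supplies positivity in the directions normal to $V$. Near $V$, tangential positivity comes from the hypothesis on $f$, and the mixed terms are absorbed by Cauchy--Schwarz. This is the classical argument of \cite[Proposition~2.1]{CGZ13} and \cite{Sch98}; the point here is to track the loss from $\varepsilon$ down to $\eta$.

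\textbf{Step 1: a smooth extension $\tilde f$.} Cover $V$ by finitely many holomorphic charts $W_\alpha$ of $X$ in which $V=\{z''=0\}$, with coordinates $z=(z',z'')$. In each chart put $f_\alpha(z',z''):=f(z')$. Glue these with a partition of unity $\{\theta_\alpha\}$ on a neighbourhood of $V$ to get $\tilde f:=\sum_\alpha\theta_\alpha f_\alpha$. Then $\tilde f$ is smooth near $V$ and $\tilde f|_V=f$.

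\textbf{Step 2: a normal potential $\rho$.} Let $\rho:=\sum_\alpha\theta_\alpha|z''_\alpha|^2$. This is smooth, nonnegative, and vanishes exactly on $V$ to second order. At points of $V$, the form $\ddc\rho$ is positive definite on the normal directions. Indeed, each $\theta_\alpha$ is multiplied by a function vanishing to order two, so the derivative terms coming from $\theta_\alpha$ drop out on $V$, and $\ddc\rho|_V=\sum_\alpha\theta_\alpha\,\ddc|z''_\alpha|^2$. Each summand is positive semidefinite and its kernel is exactly $T^{1,0}V$.

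\textbf{Step 3: set $H_f:=\tilde f+A\rho$ and check positivity on $V$.} For $p\in V$, split $v\in T^{1,0}_pX$ as $v=v'+v''$, with $v'$ tangent to $V$ and $v''$ orthogonal to it with respect to $\omega$. Write $\Theta:=\omega+\ddc\tilde f$. On tangent vectors, $\Theta(v',\bar v')=(\omega_V+\ddc f)(v',\bar v')\geq\varepsilon|v'|^2$. The mixed and normal entries of $\Theta$ are bounded by a constant $C$, uniformly on compact $V$. The normal potential gives $\ddc\rho(v'',\bar v'')\geq c|v''|^2$ with $c>0$, and $\ddc\rho$ has no tangential or mixed entries at $p$. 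Cauchy--Schwarz then yields
\[
(\omega+\ddc H_f)(v,\bar v)\geq \varepsilon|v'|^2-2C|v'||v''|-C|v''|^2+Ac|v''|^2 .
\]
Choose $A$ so large that this is at least $\tfrac{\varepsilon}{2}\bigl(|v'|^2+|v''|^2\bigr)$. Since $|v|^2\leq 2(|v'|^2+|v''|^2)$ up to the fixed comparison of norms, this bounds the form by a multiple of $\omega$ strictly larger than $\eta$, using $\eta<\varepsilon/2$.

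\textbf{Step 4: pass to a neighbourhood.} The inequality $\omega+\ddc H_f>\eta\omega$ is strict and holds on the compact set $V$, so by continuity of the smooth form it persists on some open neighbourhood $U_f$ of $V$. Finally $H_f|_V=\tilde f|_V=f$, since $\rho$ vanishes on $V$.

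\textbf{Main obstacle.} The only delicate point is Step 3: obtaining a margin strictly above $\eta$, with the constant uniform over $V$ and the constant $A$ depending only on $f$. This is handled by the strictness of the hypothesis ($\varepsilon>2\eta$) together with the compactness of $V$. Note that $U_f$ depends on $f$, which is exactly what the statement allows.
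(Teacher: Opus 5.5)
Your proof is correct and follows the paper's argument. You take a smooth extension $\tilde f$ and add $A$ times a nonnegative function vanishing to second order on $V$, whose $\ddc$ along $V$ has no tangential or mixed entries and is positive on normal directions. Young's inequality absorbs the mixed terms of $\omega+\ddc\tilde f$, and compactness plus continuity pass the estimate to a neighbourhood.

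The only real difference is the normal potential. The paper uses $h=\operatorname{dist}_\omega(\cdot,V)^2$ and imports its complex Hessian along $V$ from \cite[Proposition 2.1]{NWZ24}, which gives the explicit constant $1/\pi$. Your $\rho=\sum_\alpha\theta_\alpha|z''_\alpha|^2$ obtains the same structure by a short self-contained computation: the $d\theta_\alpha$ terms die because $|z''_\alpha|^2$ vanishes to second order. The price is a non-explicit constant $c>0$ from compactness, which is harmless.

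One line in Step 3 needs fixing. The bound $|v|^2\le 2(|v'|^2+|v''|^2)$ would only give $\omega+\ddc H_f\geq\frac{\varepsilon}{4}\omega$ on $V$. That need not exceed $\eta$ when $\eta\geq\varepsilon/4$. However, your splitting is $\omega$-orthogonal, so $|v|^2=|v'|^2+|v''|^2$ exactly. This gives $\omega+\ddc H_f\geq\frac{\varepsilon}{2}\omega>\eta\omega$ on $V$, exactly as in the paper.
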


	To prove \cref{lem:local-smooth}, we need the following proposition due to Ning-Wang-Zhou.
	\begin{prop}[{\cite[Proposition 2.1]{NWZ24}}]\label{prop:hessian}Let $(X,\omega)$ be a complex $n$-dimensional  Hermitian manifold with a Hermitian metric $\omega$. Let $V\subset X$ be a complex submanifold of complex dimension $k<n$, and $h(z):=\mbox{dist}^2(z, V)$ be the square of the distance function  on $X$ with respect to the Reimannian metric induced by $\omega$.   Let $p\in V$ be an arbitrarily fixed point in $V$, then there is a holomorphic coordinate chart  $(U,z=(z_1,\cdots,z_k,z_{k+1},\cdots, z_n))$ centered at $p$ such that $U\cap V=\{z_{k+1}=\cdots=z_n=0\}$, $\omega=\sqrt{-1}\sum_{i,j=1}^ng_{i\bar j}dz_i\wedge d\bar z_j$ with $g_{i\bar j}(0)=\delta_{i\bar j}$ for $i,j=1,\cdots, n$, and
		\begin{align*}
			\frac{\partial^2h}{\partial z_i\partial \bar z_j}(0)=
			\begin{cases}
				0, \quad\quad i \; \text{or} \; j\leq k; \\
				\delta_{i\bar j}, \quad\;\  i,j>k.
			\end{cases}
		\end{align*}
	\end{prop}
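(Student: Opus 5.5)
The statement to prove is \cref{prop:hessian}, a local normal form. The plan is to first pick good holomorphic coordinates and then compute the complex Hessian of $h$ at $p$ by comparing $h$ with the squared norm of the normal part.

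\emph{Choice of coordinates.} Since $V$ is a complex submanifold, there is a holomorphic chart $(U,w)$ centered at $p$ with $U\cap V=\{w_{k+1}=\dots=w_n=0\}$. We then apply a complex linear change of coordinates that preserves this form. First, Gram--Schmidt on $T_pV$ gives $g(0)$ block-diagonal and equal to the identity on $T_pV$. Next, replace the normal coordinates $w''=(w_{k+1},\dots,w_n)$ by $w''$ composed with a linear map, and the tangential ones by $w'+Aw''$. This leaves $\{w''=0\}$ invariant and lets us arrange that the $\omega(0)$-orthogonal complement of $T_pV$ is spanned by $\partial/\partial z_{k+1},\dots,\partial/\partial z_n$ and that $g_{i\bar j}(0)=\delta_{i\bar j}$. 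Write $z=(z',z'')$.

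\emph{Second-order expansion of $h$.} Near $p$, $h$ is smooth, because $V$ is a submanifold and we are inside a tubular neighbourhood. On $V$ we have $h=0$ and $dh=0$, so $h$ vanishes to second order along $V$. Hence $\partial^2 h/\partial z_i\partial\bar z_j(0)=0$ whenever $i$ or $j\le k$: differentiating twice along tangential directions gives zero, and mixed second derivatives with one tangential index also vanish, since $\partial_{z''}h$ vanishes identically on $V$ and can then be differentiated tangentially. Thus only the normal block matters. Its value equals the Hessian of $h$ restricted to the normal slice $\{z'=0\}$ at $0$. There the real Hessian of $\mathrm{dist}^2(\cdot,V)$ at a point of $V$ is twice the orthogonal projection onto the normal space $N_pV=(T_pV)^{\perp}$ with respect to the Riemannian metric $g(0)$. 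This is the standard fact that $\mathrm{dist}^2(x,V)=|x-\pi(x)|^2+O(|x|^3)$ in geodesic normal coordinates, which agree with our linear coordinates up to second order at $p$.

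By our normalization, $N_pV$ is the $z''$-plane and $g(0)$ is Euclidean, so $h(0,z'')=|z''|^2+O(|z''|^3)$ in real terms. Then $\partial^2h/\partial z_i\partial\bar z_j(0)=\delta_{ij}$ for $i,j>k$, given the normalization $|z|^2=\sum z_i\bar z_i$ and $\partial\bar\partial|z_i|^2=1$.

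\emph{Main obstacle.} The delicate step is justifying $h(z)=|z''|^2_{g(0)}+O(|z|^3)$ when $\omega$ is merely Hermitian and the coordinates are holomorphic rather than Riemannian normal coordinates. I would handle this via Riemannian normal coordinates $x$ at $p$. These differ from our linear coordinates by $x=z+O(|z|^2)$, and that quadratic correction can affect second derivatives of $h$ only through first derivatives of $h$, which vanish at $0$. So the Hessian at $0$ is coordinate-independent up to the linear change. In Riemannian normal coordinates, $V$ is tangent to $T_pV$ to first order, and $\mathrm{dist}^2(x,V)=|x^{\perp}|^2+O(|x|^3)$. Combining this with the vanishing of the tangential and mixed blocks gives the stated formula.
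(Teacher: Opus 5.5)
The paper does not prove this proposition. It is quoted from \cite[Proposition~2.1]{NWZ24} and used only in the proof of \cref{lem:local-smooth}, so there is no in-paper argument to compare with.

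On its own terms your argument is correct:
\begin{itemize}
\item the shear $z'=w'+Aw''$ plus Gram--Schmidt in each block produces the chart;
\item the blocks with an index $\le k$ vanish because $h=dh=0$ on $V$;
\item the Hessian at a critical point is indeed coordinate-independent, as you argue.
\end{itemize}

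The one step with real content, $h=|x^\perp|^2+O(|x|^3)$, you assert as ``standard'' rather than prove. Passing to Riemannian normal coordinates does not supply it: there the metric improves to second order, which is not needed, but $V$ is no longer flat. It is simpler to stay in your holomorphic chart, where $V\cap U=\{z''=0\}$ is exactly linear and $g_{i\bar j}(z)=\delta_{ij}+O(|z|)$.
\begin{itemize}
\item On the ball of radius $3|z|$, the $g$-length and the Euclidean length of a curve agree up to a factor $1+O(|z|)$.
\item Upper bound: the segment from $z$ to $(z',0)$.
\item Lower bound: a curve from $z$ to $V$ either stays in that ball, so it ends on $\{z''=0\}$ and has $g$-length at least $(1-O(|z|))|z''|$, or it leaves the ball and has $g$-length at least $(1-O(|z|))\,2|z|\ge|z''|$.
\end{itemize}
Hence $\operatorname{dist}_\omega(z,V)=(1+O(|z|))\,|z''|$ and $h(z)=|z''|^2+O(|z|^3)$. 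First-order agreement of the metric suffices because the error $|z''|^2\,O(|z|)$ is already cubic. Since $h$ is smooth, this fixes its second-order jet at $0$. Computing $\partial_{z_i}\partial_{\bar z_j}|z''|^2$ then gives all three cases at once, which makes your separate block-vanishing step redundant.

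An equally short alternative: $h(\exp_q v)=|v|^2_{g(q)}$ for $q\in V$ and small $v\in N_qV$, and the normal exponential map has differential equal to the identity along the zero section.

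Finally, the constant $1$ in the normal block presumes that the Riemannian metric induced by $\omega$ is $\mathrm{Re}\sum g_{i\bar j}\,dz_i\otimes d\bar z_j$. With $g(u,v)=\omega(u,Jv)$ taken literally, one gets $2\delta_{ij}$. Your phrase ``$g(0)$ is Euclidean'' tacitly adopts the first convention, which is the one the statement intends.
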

	\begin{proof}[Proof of \cref{lem:local-smooth}]
		
		The proof is essentially contained in the proof of \cite[Proposition 2.1]{CGZ13}. We include a proof here for the sake of completeness.
		Let $h=\operatorname{dist}_\omega(\,\cdot\,,V)^2$. It is smooth on a tubular neighborhood of $V$ \cite{Ma}. Along $V$, use the  $\omega$-orthogonal $\mathcal C^\infty$ splitting
		\[
		T^{1,0}X|_V=T^{1,0}V\oplus N^{1,0}_{V/X}.
		\]
		Since $\omega $ is compatible with the complex structure $J$ of $X$, and $T^{1,0}V$ is $J$-invariant, it follows that $N^{1,0}_{V/X}$ is also $J$-invariant, which is isomorphic to the real normal bundle $N_{V/X}$.
		By \cref{prop:hessian}, for any $p\in V$, and $\xi\in T^{1,0}_pV$ and $\zeta\in N^{1,0}_{V/X,p}$, we have
		\begin{equation}\label{eq:distance-hessian}
			\ddc h(\xi+\zeta,\overline{\xi+\zeta})
			\geq \frac{1}{\pi}|\zeta|_\omega^2.
		\end{equation}
		Choose any smooth extension $\widetilde f$ of $f$ to a neighborhood of $V$ and put
		\[
		\Theta:=\omega+\ddc\widetilde f.
		\]
		The tangential block of $\Theta$ along $V$ is bounded below by $\varepsilon\omega_V$. Since $V$ is compact, for this fixed $f$ there is $M_f>0$ such that, along $V$,
		\begin{align*}
			|\Theta(\xi,\bar\zeta)|&\leq M_f|\xi|_\omega|\zeta|_\omega,\\
			\Theta(\zeta,\bar\zeta)&\geq-M_f|\zeta|_\omega^2.
		\end{align*}
		Hence, for $A>0$,
		\begin{align*}
			(\Theta+A\ddc h)(\xi+\zeta,\overline{\xi+\zeta})
			&\geq \varepsilon|\xi|_\omega^2
			-2M_f|\xi|_\omega|\zeta|_\omega
			+(Ac_0-M_f)|\zeta|_\omega^2\\
			&\geq \frac{\varepsilon}{2}|\xi|_\omega^2
			+\left(\frac{A}{\pi}-M_f-\frac{2M_f^2}{\varepsilon}\right)|\zeta|_\omega^2,
		\end{align*}
		where in the second inequality, we use the Young inequality $2ab\leq \frac{\varepsilon}{2}a^2+\frac{2}{\varepsilon}b^2$ with $a=|\xi|_\omega$ and $b=M_f|\zeta|_\omega$.
		Choose $A=A_f$ so that the last coefficient is at least $\varepsilon$. Then, along $V$,
		\[
		\omega+\ddc(\widetilde f+A_fh)\geq\frac{\varepsilon}{2}\omega.
		\]
		By compactness and continuity, after shrinking to a neighborhood $U_f$ this remains bounded below by $\eta\omega$. Set
		\[
		H_f:=\widetilde f+A_fh.
		\]
		Since $h|_V=0$, the restriction of $H_f$ is $f$.
	\end{proof}
	\begin{rem}
		It is worth mentioning that, in the proof of \cref{thm:main}, it suffices that $f$ admits a continuous extension to a strictly $\omega$-plurisubharmonic function on a neighborhood of $V$, which may also be established via a Richberg type argument similar as \cite{CT14}.
	\end{rem}
	
	
	\section{Dominated global patching lemma}
	
	In this section, we prove a dominated global patching lemma which will play the key role in the proof of \cref{thm:main}.
	
	\begin{lem}\label{lem:insertion}
		Let $G\in\PSH(X,\omega,\delta)$ for some $\delta>0$ and suppose that $G$ is smooth on a neighborhood of $V$. Let $H\in \mathcal C^\infty(U)\cap\PSH(U,\omega,\eta)$ on an open neighborhood $U$ of $V$ for some $\eta>0$. Suppose further that  $\restr{H}{V}<\restr{G}{V}$.
		Then, for every $0<\delta'<\min\{\delta,\eta\}$,
		there exists a global qpsh function $G'$ with the following properties:
		\begin{itemize}
			\item  $G'\leq G$ on $X$;
			\item $G'=H$ on a neighborhood of $V$;
			\item $G'\in \PSH(X,\omega,\delta')$.
		\end{itemize}
		In particular, $G'$ is smooth near $V$ and $\restr{G'}{V}=\restr{H}{V}$.
	\end{lem}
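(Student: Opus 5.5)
We glue $H$ into $G$ by a regularized maximum, using the barrier of \cref{lem:barrier} to force the glued function to coincide with $G$ away from $V$. First pick a constant $c>0$ and a neighborhood of $V$ on which $H<G-c$. This is possible because $H|_V<G|_V$ is a strict inequality of continuous functions on the compact $V$, and both functions are continuous near $V$ ($G$ is smooth there). Next shrink to neighborhoods $V\Subset W\Subset W'\Subset U$ with $W'$ inside the region where $H<G-c$ and $G$ is smooth. Replacing $\delta,\eta$ by a common $\delta''$ with $\delta'<\delta''<\min\{\delta,\eta\}$, we may assume both $G,H\in\PSH(\cdot,\omega,\delta'')$ on $W'$.

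Let $b=b_W$ from \cref{lem:barrier}, so $b\le 0$, $b=0$ near $X\setminus W$, $b=-\infty$ exactly on $V$, and $\ddc b\geq -C_W\omega$. For small $t>0$ consider
\[
\psi_t:=G+t\,b ,
\]
which satisfies $\omega+\ddc\psi_t\ge(\delta''-tC_W)\omega\ge\delta'\omega$ once $t\le(\delta''-\delta')/C_W$. Also $\psi_t\le G$ everywhere, $\psi_t=G$ near $X\setminus W$, and $\psi_t\to-\infty$ at $V$. Define
\[
G':=\begin{cases}\max\{H,\psi_t\}&\text{on }W',\\ \psi_t&\text{on }X\setminus W'.\end{cases}
\]
The key points are the following.
\begin{enumerate}[(a)]
\item Near $V$, $\psi_t=-\infty$ on $V$ and $e^{b}$ is continuous, so $\psi_t<H$ on a neighborhood of $V$. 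Hence $G'=H$ there.
\item On $W'\setminus W$ we have $b=0$, so $\psi_t=G>H+c>H$. Hence $G'=\psi_t$ near $\partial W'$, and the two definitions agree on an open overlap. This makes $G'$ a well-defined qpsh function.
\item Fact (i) gives $G'\in\PSH(X,\omega,\delta')$.
\item $G'\le G$: on $W'$ we have $H<G$ and $\psi_t\le G$, and outside $W'$ we have $\psi_t\le G$.
\end{enumerate}

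The main obstacle is (b), the compatibility at the boundary of the patching region. $H$ only lives on $U$, so we must ensure the maximum is realized by $\psi_t$ on a whole collar around $\partial W'$. This is exactly why we choose $W'$ inside the region where $H<G-c$, and why the barrier must vanish identically outside $W$; $b$ being $0$ near $X\setminus W$ is what \cref{lem:barrier} provides. If $H<G$ only held on $V$, we would first shrink $W'$ to a thin neighborhood where the strict inequality persists by continuity. This costs nothing, since $C_W$ then depends only on $W$, fixed beforehand, and $t$ is chosen afterwards.

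Smoothness of $G'$ near $V$ is immediate from (a), since $H$ is smooth there, and the same fact gives $G'|_V=H|_V$.
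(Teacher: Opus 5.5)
Your proof is correct and is essentially the paper's argument. Both subtract a small multiple of the barrier (supported compactly inside the patching region) from $G$, take $\max\{H,G+tb\}$ on a neighborhood where $H<G$, and use exponential continuity of $b$ to get $G'=H$ near $V$. Your uniform gap $c$ and intermediate constant $\delta''$ are harmless but unnecessary, since $H<G$ on the collar and $\eta>\delta'$ already suffice.
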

	
	\begin{proof}
		Since $G$ and $H$ are smooth near $V$ and $\restr{H}{V}<\restr{G}{V}$, there is an open set $O$ which
		\[
		V\Subset O\Subset U
		\]
		contained in the smoothness neighborhood of $G$, such that
		\begin{equation}\label{eq:HbelowG}
			H<G\quad\text{on }\overline O.
		\end{equation}
		Apply Lemma~\ref{lem:barrier} inside $O$. We obtain a function $b=b_O\leq0$, with pole set $V$, supported compactly in $O$, and satisfying
		\[
		\ddc b\geq-C\omega
		\]
		for some $C\geq0$.
		
		Choose $\lambda>0$ so small that
		\begin{equation}\label{eq:lambda-choice}
			\delta-\lambda C\geq\delta'.
		\end{equation}
		Set
		\[
		Q:=G+\lambda b.
		\]
		Then $Q\leq G$ and
		\[
		\omega+\ddc Q
		\geq(\delta-\lambda C)\omega
		\geq\delta'\omega.
		\]
		Define
		\begin{equation}\label{eq:inserted-function}
			G':=
			\begin{cases}
				\max\{H,Q\},&\text{on }O,\\
				Q,&\text{on }X\setminus O.
			\end{cases}
		\end{equation}
		Since  the function $b$ is exponentially continuous, so is $Q$. From the fact that $e^H|_V>0=e^Q|_V$, it follows from the continuity of $e^H $ and $e^Q$ that near $V$, $G'=H$. In particular $G'$ is smooth near $V$.  	Then it is easy to check that $G'$ satisfies the other desired properties. The proof of \cref{lem:insertion} is complete.
		
		
	\end{proof}
	
	
	\section{Strict monotone smooth approximation with a gap}
	In this section, we will prove a strict monotone smooth approximation with a gap, of a  $\omega_V$-psh function on $V$.
	
	\begin{lem}\label{lem:strict-gap-approx}
		Let $\varphi\in\PSH(V,\omega_V)$. After subtracting a constant from $\varphi$, there exists a sequence $\psi_j\in \mathcal C^\infty(V)$ such that
		\begin{itemize}
			\item $\psi_1\leq-1$, and $\psi_j\searrow\varphi$;
			\item $\psi_j\in \PSH(V,\omega_V,2^{-j})$;
			\item $\psi_{j-1}-\psi_j\geq 2^{-j}\quad(j\geq2)$.
		\end{itemize}
	\end{lem}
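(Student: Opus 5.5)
The plan is to start from the smooth decreasing approximation given by \cref{thm: appro} applied on the compact Hermitian manifold $(V,\omega_V)$, and then to manufacture both the strictness and the gap by convex combination with a constant and by adding explicit decreasing constants.

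\textbf{Step 1 (approximation and normalisation).} Since $V$ is compact and $\varphi$ is upper semicontinuous, $\varphi$ is bounded above. Subtract a constant so that $\varphi\leq -3$. By \cref{thm: appro} (applied componentwise if $V$ is disconnected) there are smooth $\varphi_j\in\PSH(V,\omega_V)$ with $\varphi_j\searrow\varphi$. By Dini-type reasoning on the compact set $V$ with the usc limit $\varphi\le -3$, $\max_V\varphi_j\to\max_V\varphi\le -3$. After discarding finitely many terms we may therefore assume $\varphi_j\leq -2$ for all $j$.

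\textbf{Step 2 (strictness).} The constant $0$ lies in $\PSH(V,\omega_V,1)$. Hence for $t\in(0,1)$ the convex combination $(1-t)\varphi_j+t\cdot 0$ lies in $\PSH(V,\omega_V,t)$, since $\omega_V+\ddc((1-t)u)=(1-t)(\omega_V+\ddc u)+t\,\omega_V\geq t\,\omega_V$.

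Set $t_j:=2^{-j}$ and
\[
\tilde\psi_j:=(1-2^{-j})\varphi_j\in\PSH(V,\omega_V,2^{-j}).
\]
This sequence is decreasing. Indeed, $\varphi_j\le\varphi_{j-1}$, and $(1-2^{-j})\ge(1-2^{-j+1})$ multiplies a negative function, so
\[
(1-2^{-j})\varphi_j\le(1-2^{-j})\varphi_{j-1}\le(1-2^{-j+1})\varphi_{j-1}.
\]
Pointwise, $\tilde\psi_j\to\varphi$, including at points where $\varphi=-\infty$.

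\textbf{Step 3 (gap).} Put
\[
\psi_j:=\tilde\psi_j+2^{-j}.
\]
Adding a constant preserves smoothness and the class $\PSH(V,\omega_V,2^{-j})$. We have
\[
\psi_{j-1}-\psi_j=(\tilde\psi_{j-1}-\tilde\psi_j)+2^{-j+1}-2^{-j}\geq 2^{-j}.
\]
Also $\psi_j\to\varphi$, $\psi_j$ is strictly decreasing, and $\psi_1\leq \tfrac12(-2)+\tfrac12=-\tfrac12$.

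To get $\psi_1\le -1$ instead, I will normalise with more room in Step 1, taking $\varphi_j\le -4$. Then $\psi_1\le\tfrac12(-4)+\tfrac12\le -1$. The overall subtracted constant is absorbed into ``after subtracting a constant from $\varphi$''.

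\textbf{Main obstacle.} The main subtle point is monotonicity in Step 2. Scaling by $1-2^{-j}$ only preserves the decreasing property because all the $\varphi_j$ are negative, which is why the normalisation in Step 1 must hold for the whole sequence and not just for $\varphi$. Beyond that, the argument only needs the existence of the smooth decreasing approximation from \cref{thm: appro}, which carries the real difficulty. On a disconnected $V$ where $\varphi\equiv-\infty$ on some component, $\varphi_j$ is still smooth and the same construction applies verbatim.
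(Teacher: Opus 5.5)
Your proposal is correct and is essentially the paper's proof. The paper also takes $\psi_j=(1-2^{-j})\varphi_j+2^{-j}$ with $\varphi_j$ from \cref{thm: appro}, and differs only in two small ways. First, it normalises by subtracting a constant so that $\varphi_1\le -3$; monotonicity then gives $\varphi_j\le-3$ for all $j$, so no Dini argument or discarding of terms is needed. Second, it verifies the gap via the identity $\psi_{j-1}-\psi_j=a_{j-1}(\varphi_{j-1}-\varphi_j)+2^{-j}(1-\varphi_j)$.

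One point needs fixing. On a component where $\varphi\equiv-\infty$, \cref{thm: appro} does not apply, so you must supply the approximants by hand, e.g.\ constants $\varphi_j:=-j-3$ (the paper simply sets $\psi_j:=-j-3$ there). Your construction then goes through unchanged.
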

	
	\begin{proof}
		
		On a connected component of $V$, on which $\varphi\equiv-\infty$, set  $ \psi_j:=-j-3$.
		There the curvature lower bound is $\omega_V\geq2^{-j}\omega_V$, the consecutive gap is $1$, and $\psi_j\searrow-\infty$.
		On every component where $\varphi$ is not identically $-\infty$,
		\cref{thm: appro} gives smooth functions
		\[
		\varphi_{j}\in\PSH(V,\omega_V),
		\qquad
		\varphi_{j}\searrow {\varphi}.
		\]
		Subtracting  one common constant from $\varphi$ and all these approximants so that $\varphi_{1}\leq-3$.
		Put $a_j:=1-2^{-j}$ and define, $\psi_j:=a_j\varphi_{j}+2^{-j}$.
		Then
		\[
		\omega_V+\ddc\psi_j
		=2^{-j}\omega_V+a_j(\omega_V+\ddc\varphi_{j})
		\geq2^{-j}\omega_V,
		\]
		and $\psi_1\leq-1$. Moreover, for $j\geq2$,
		\begin{align*}
			\psi_{j-1}-\psi_j
			&=a_{j-1}(\varphi_{j-1}-\varphi_{j})
			+(a_{j-1}-a_j)\varphi_{j}
			+(2^{-(j-1)}-2^{-j})\\
			&=a_{j-1}(\varphi_{j-1}-\varphi_{j})
			+2^{-j}(1-\varphi_{j})\\
			&\geq2^{-j}.
		\end{align*}
		The convergence follows from $a_j\to1$, $2^{-j}\to0$, and $\varphi_{j}\searrow\varphi$, including at points where $\varphi=-\infty$.
		Combining the definitions over the disjoint components gives a smooth global sequence on $V$ satisfying the properties in \cref{lem:strict-gap-approx}.
	\end{proof}

	\section{Proof of Main Theorems}
	
	In this section, we give the proof of our main theorems.
	\begin{proof}[Proof of \cref{thm:main}]

		We may assume that $X$ is connected, while the general case follows componentwise. Subtract a constant from $\varphi$, take the sequence $\psi_j$ provided by Lemma~\ref{lem:strict-gap-approx}, and finally add the constant back.
		
		Set
		\[
		G_0:=0,
		\qquad
		\delta_0:=1.
		\]
		We construct inductively global qpsh functions $G_j$ and positive numbers $\delta_j$ for $j\geq1$ such that
		\begin{align}
			&G_j\leq G_{j-1},\label{eq:Gmonotone}\\
			&G_j\text{ is smooth on a neighborhood of }V,\label{eq:Gsmooth}\\
			&\restr{G_j}{V}=\psi_j,\label{eq:Gtrace}\\
			& G_j\in \PSH(X,\omega,\delta_j).\label{eq:Gpositive}
		\end{align}
		Suppose $G_{j-1}$ is constructed. By Lemma~\ref{lem:local-smooth} applied with $\varepsilon=2^{-j}$, there are a neighborhood $U_j$ of $V$, a smooth function $H_j$ on $U_j$, and a number $\eta_j>0$ such that
		\[
		\restr{H_j}{V}=\psi_j,
		\qquad
		\omega+\ddc H_j\geq\eta_j\omega\quad\text{on }U_j.
		\]
		Set
		\[
		\delta_j:=\frac12\min\{\delta_{j-1},\eta_j\}>0.
		\]
		For $j=1$,  \cref{lem:local-smooth} gives
		\[
		\restr{H_1}{V}=\psi_1\leq-1
		=\restr{G_0}{V}-1.
		\]
		For $j\geq2$, \cref{lem:local-smooth}  gives
		\[
		\restr{H_j}{V}=\psi_j
		\leq\psi_{j-1}-2^{-j}
		=\restr{G_{j-1}}{V}-2^{-j}.
		\]
		Apply Lemma~\ref{lem:insertion} with $G=G_{j-1}$, $H=H_j$, and target lower bound $\delta_j$.
       We denote the resulting function by $G_j$, then $\{G_j,\delta_j\}$ satisfies \eqref{eq:Gmonotone}--\eqref{eq:Gpositive}.
		
		The sequence $G_j$ is decreasing. Define
		\[
		\Phi:=\lim_{j\to\infty}G_j.
		\]
		Choose $p\in V$ with $\varphi(p)>-\infty$. Then
		\[
		G_j(p)=\psi_j(p)\longrightarrow\varphi(p)>-\infty,
		\]
		so the sequence does not converge identically to $-\infty$. Since every $G_j$ is $\omega$-psh, the decreasing-limit theorem gives $\Phi\in\PSH(X,\omega)$. Finally,
		\[
		\restr{\Phi}{V}
		=\lim_{j\to\infty}\restr{G_j}{V}
		=\lim_{j\to\infty}\psi_j
		=\varphi.
		\]
		Adding back the normalization constant completes the proof.
	\end{proof}
	In the following, we give a proof of \cref{thm:strict}.
	\begin{proof}[Proof of \cref{thm:strict}]
		Let $\varphi\in \mbox{PSH}(X,\omega)$ such that $\omega_V+dd^c\varphi\geq \varepsilon\omega_V$. For any $0<\varepsilon'<\varepsilon$, we let $\omega'=(1-\varepsilon')\omega$, and then apply \cref{thm:main} to get a function $\Phi\in \mbox{PSH}(X,\omega')$, such that $\Phi|_V=\varphi$, then it is obvious that $\omega+dd^c\Phi=\omega'+\varepsilon'\omega+dd^c\Phi\geq \varepsilon'\omega$ on $X$. This completes the proof of \cref{thm:strict}.
	\end{proof}

\end{document}